\definecolor{mylinkcolor}{rgb}{0.8,0,0}
\definecolor{myurlcolor}{rgb}{0,0,0.8}
\definecolor{mycitecolor}{rgb}{0,0,0.8}
\newtheorem{corollary}{Corollary}
\newtheorem{lemma}[corollary]{Lemma}
\newtheorem{proposition}[corollary]{Proposition}
\newtheorem{theorem}[corollary]{Theorem}
\theoremstyle{definition}
\newtheorem{remark}[corollary]{Remark}
\newtheorem{example}[corollary]{Example}
\numberwithin{corollary}{section}
\newcommand{\Q}{\mathbb Q}
\newcommand{\Z}{\mathbb Z}
\newcommand{\F}{\mathbb F}
\newcommand{\PP}{\mathbb P}
\newcommand{\Gal}{\operatorname{Gal}}
\newcommand{\id}{\operatorname{id}}
\newcommand{\Pic}{\operatorname{Pic}}
\newcommand{\red}{\operatorname{red}}
\newcommand{\SL}{\operatorname{SL}}
\newcommand{\Sym}{\operatorname{Sym}}
\newcommand{\tor}{\mathrm{tor}}
\newcommand{\smallmat}[4]{\left(\begin{smallmatrix}#1 & #2 \\ #3 & #4\end{smallmatrix}\right)}
\newcommand{\diam}[1]{\langle#1\rangle}
\newcommand{\isom}{\buildrel\sim\over\longrightarrow}
\begin{document}

\title{Fields of definition of elliptic curves with prescribed torsion}

\author[P.\thinspace J. Bruin]{Peter Bruin}
\address{Mathematisch Instituut\\ Universiteit Leiden\\ Postbus 9512\\ 2300 RA \ Leiden\\ Netherlands}
\email{P.J.Bruin@math.leidenuniv.nl}
\thanks{The first author was supported by the Netherlands Organisation for Scientific Research (NWO) through Veni grant 639.031.346}
\author[F. Najman]{Filip Najman}
\address{Department of Mathematics\\ University of Zagreb\\ Bijeni\v cka cesta 30\\ 10000 Zagreb\\ Croatia}
\email{fnajman@math.hr}
\thanks{The second author gratefully acknowledges support from the QuantiXLie Center of Excellence.}

\begin{abstract}
	We prove that all elliptic curves over quadratic fields with a subgroup isomorphic to $C_{16}$, as well as all elliptic curves over cubic fields with a subgroup isomorphic to $C_2\times C_{14}$, are base changes of elliptic curves defined over $\Q$. We obtain these results by studying geometric properties of modular curves and maps between modular curves, and then obtaining a modular description of these curves and maps.
\end{abstract}

\subjclass[2010]{Primary 11G05; Secondary 11G18}

\keywords{Elliptic curves, modular curves}

\maketitle

\section{Introduction}

By the Mordell--Weil theorem, the Abelian group $E(K)$ of $K$-rational points on an elliptic curve~$E$ over a number field~$K$ is finitely generated. This group can therefore be decomposed as $E(K)\simeq E(K)_\tor\oplus \Z^r$, where $r$ is the rank of~$E$ over~$K$.

Let $\Phi(d)$ denote the set of isomorphism classes of finite groups $G$ with the property that there exists an elliptic curve $E$ over a number field $K$ of degree~$d$ such that $E(K)_\tor\simeq G$. In this paper we will show that for $d=2$ and $d=3$ and for certain groups $G\in \Phi(d)$, if $E(K)_\tor\simeq G$, it turns out that $E$ is a base change of an elliptic curve over $\Q$.

The first example of a result where the torsion of an elliptic curve over a number field of given degree yields information about its field of definition can be found in \cite{bbdn}. There it was shown that if an elliptic curve over a quadratic field~$K$ has a point of order $13$ or~$18$, then $K$ is a real quadratic field. In other words, there are no elliptic curves over imaginary quadratic fields with a point of order $13$ or~$18$. Another result in the same paper shows that if an elliptic curve over a quartic field~$K$ has a point of order~$22$, then $K$ has a quadratic subfield over which the modular curve $Y_1(11)$ has points; note that ``most'' quartic fields do not have quadratic subfields. In \cite{dkm}, it is proved that if an elliptic curve over a quartic field~$K$ has a point of order~$17$ and $L$ is the normal closure of $K$ over~$\Q$, then $\Gal(L/\Q)$ is isomorphic to $D_4$ or~$S_4$.

The goal of this paper is to prove the following two theorems.

\begin{theorem}
\label{theorem:quadratic}
Every elliptic curve over a quadratic field with a subgroup isomorphic to $C_{16}$ is a base change of an elliptic curve over\/~$\Q$ with a subgroup isomorphic to~$C_8$.
\end{theorem}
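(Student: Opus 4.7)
The plan is to use the hyperelliptic structure on the genus $2$ modular curve $X_1(16)$ combined with a Galois descent argument on the underlying elliptic curve.

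First, I would fix a hyperelliptic model of $X_1(16)$ over $\Q$ and identify its hyperelliptic involution $w$ with the diamond automorphism $\langle -1\rangle$, whose moduli action sends $(E,P)$ to $(E,-P)$. Both are involutions defined over $\Q$ and the quotient $X_1(16)/w$ is of genus~$0$, so the identification is essentially forced by these constraints; in any case it can be verified directly on an explicit model.

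Second, a $K$-rational subgroup of $E$ isomorphic to $C_{16}$ is generated by a $K$-rational point $P$ of order $16$, yielding a point $x=(E,P)\in Y_1(16)(K)$. Letting $\sigma$ generate $\Gal(K/\Q)$, I aim to show that $x^\sigma=w(x)$ inside $X_1(16)(K)$. This is a classification statement for non-cuspidal quadratic points on $X_1(16)$: I would compute the Mordell--Weil group $J_1(16)(\Q)$ (expected to be finite, e.g.\ via cuspidal divisor arguments), and then use the Abel--Jacobi map to show that every non-trivial class in $J_1(16)(\Q)$ either has a cuspidal effective representative or corresponds to a hyperelliptic fiber $P+w(P)$. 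This rules out sporadic Galois-conjugate pairs not of the form $(P,w(P))$.

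Third, once $x^\sigma=w(x)$ there is a $K$-isomorphism $\phi\colon E\to E^\sigma$ satisfying $\phi(P)=-P^\sigma$. A direct computation gives $\phi^\sigma\circ\phi(P)=P$, so the automorphism $\phi^\sigma\circ\phi\in\Aut(E)$ fixes a point of order $16$; since the roots of unity in the endomorphism ring of an elliptic curve that fix a point of order $16$ must be trivial, $\phi^\sigma\circ\phi=\id$. This is exactly the Galois descent cocycle condition for the quadratic extension $K/\Q$, so $E$ descends to an elliptic curve $E_0/\Q$ with $E\cong E_0\times_\Q K$.

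The main obstacle is the second step: rigorously controlling $J_1(16)(\Q)$ on a concrete model and excluding sporadic quadratic points. This is where the ``geometric properties of modular curves and maps between modular curves'' advertised in the abstract enter, presumably by exploiting the map $X_1(16)\to X_0(16)$ and/or an isogeny decomposition of $J_1(16)$ into elliptic factors of known Mordell--Weil rank and torsion. The identification of $w$ with $\langle-1\rangle$ in the first step is a secondary issue, ultimately a finite computation.
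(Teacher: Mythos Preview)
Your overall strategy matches the paper's: classify quadratic points on $X_1(16)$ via the hyperelliptic quotient (the paper cites \cite{bbdn} for this, which does essentially the Mordell--Weil computation you outline), then run Galois descent using the moduli interpretation of the hyperelliptic involution.

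However, your first step contains a genuine error. The diamond operator $\langle -1\rangle$ acts \emph{trivially} on $X_1(N)$: the pair $(E,-P)$ is isomorphic to $(E,P)$ via $[-1]\in\Aut(E)$, so $\langle -1\rangle$ is the identity and cannot be the hyperelliptic involution. The correct statement, used in the paper, is that the hyperelliptic involution on $X_1(16)$ is $\langle 9\rangle$, which sends $(E,P)$ to $(E,9P)$; the quotient is the genus-$0$ curve $X_{\Gamma'}$ for the group $\Gamma'=\{\gamma\in\SL_2(\Z):a\equiv d\equiv 1\pmod 8,\ c\equiv 0\pmod{16}\}$.

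Once this is corrected, your step 3 goes through unchanged: from $x^\sigma=\langle 9\rangle(x)$ one obtains $\mu\colon E^\sigma\to E$ with $\mu(P^\sigma)=9P$ (after possibly replacing $\mu$ by $-\mu$), and then $\mu\circ\mu^\sigma(P)=81P=P$ since $81\equiv 1\pmod{16}$, giving the cocycle condition exactly as you argue. The paper additionally observes that $9\cdot 2P=2P$, so the pair $(E,2P)$ descends as well, and makes this explicit via the Tate normal form with respect to $2P$; but for the theorem as stated your descent of $E$ alone suffices.
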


\begin{theorem}
\label{theorem:cubic}
If $E$ is an elliptic curve over a cubic field $K$ with a subgroup isomorphic to $C_2\times C_{14}$, then $K$ is normal over\/~$\Q$ and $E$ is a base change of an elliptic curve over\/~$\Q$.
\end{theorem}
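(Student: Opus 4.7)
Let $X$ denote the modular curve whose non-cuspidal points parametrise pairs $(E,H)$ with $H \subset E$ a subgroup isomorphic to $C_2 \times C_{14}$. The plan is to classify the points of~$X$ of degree at most~$3$ and then interpret the classification modularly. The curve~$X$ sits in a tower with natural forgetful maps $X \to X_1(14)$ (of degree~$2$, remembering only the order-$14$ generator of~$H$), $X \to X_0(14)$, and further maps to $X_0(7)$ and the $j$-line; both $X_1(14)$ and $X_0(14)$ are elliptic curves over~$\Q$ of rank zero with known torsion, so any cubic point of~$X$ projects to a point of degree dividing~$3$ on each, which severely restricts the possibilities.

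An algebraic observation takes care of half of the statement. If $E/K$ carries a subgroup $H \simeq C_2\times C_{14}$ generated by a point $P$ of order~$2$ and a point $Q$ of order~$14$, then the two order-$2$ elements $P$ and $7Q$ of~$H$ together generate $E[2]$, so $E[2] \subseteq E(K)$ and hence $\Q(E[2]) \subseteq K$. Since $\Gal(\Q(E[2])/\Q)$ is a subgroup of $\GL_2(\F_2) \simeq S_3$ of order dividing~$3$, one has $[\Q(E[2]):\Q] \in \{1,3\}$. In the case $[\Q(E[2]):\Q]=3$, immediately $K=\Q(E[2])$ is normal over~$\Q$; in the remaining case $E$ has full rational $2$-torsion, so a Weierstrass equation $y^2=(x-a)(x-b)(x-c)$ can be written over~$\Q$ and $E$ manifestly has a $\Q$-model, reducing the theorem to verifying that the cubic field of definition of a $\Q$-rational cyclic subgroup of order~$7$ is normal, which follows from a short analysis of the mod-$7$ Galois representation attached to a curve $E/\Q$ with full rational $2$-torsion and a cyclic $7$-isogeny.

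To conclude in the main case $[\Q(E[2]):\Q]=3$, I would prove that every cubic point of~$X$ lies above a $\Q$-rational point of a suitable quotient classifying elliptic curves over~$\Q$. Concretely one expects an Atkin--Lehner-type involution $w$ on~$X$ whose quotient $X/\langle w\rangle$ has strictly smaller genus and whose rational points parametrise pairs $(E_0,H_0)$ of the desired shape over~$\Q$; combining this with a Chabauty--Coleman or Mordell--Weil sieve analysis on the simple factors of $\mathrm{Jac}(X)$ (several of which are accounted for by $X_1(14)$ and $X_0(14)$) should force each cubic point of~$X$ to come from such a rational point of the quotient, from which one recovers a $\Q$-model of~$E$ via Galois descent through the involution. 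The main obstacle is this last step: producing an explicit model of~$X$, controlling the Mordell--Weil ranks of all simple Jacobian factors over the relevant cubic fields, and ruling out unwanted cubic twists that might obstruct the descent of the pair $(E_0,H_0)$ back to the isomorphism class $(E,H)$.
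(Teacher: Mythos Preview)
Your ``algebraic observation'' is circular. The curve $E$ is defined over the cubic field~$K$, not over~$\Q$, so the expression $\Q(E[2])$ has no meaning: the coordinates of the $2$-torsion points depend on the chosen Weierstrass model over~$K$, and there is no field of definition of $E[2]$ over~$\Q$ until you already know that $E$ descends to~$\Q$. The inclusion $E[2]\subseteq E(K)$ only yields a model $y^2=(x-a)(x-b)(x-c)$ with $a,b,c\in K$; it does not force $a,b,c\in\Q$, and there is no Galois-theoretic statement of the form $\Gal(\Q(E[2])/\Q)\hookrightarrow\GL_2(\F_2)$ to invoke. So the entire second paragraph does not get off the ground, and in particular your case split into $[\Q(E[2]):\Q]\in\{1,3\}$ is meaningless.

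Your third paragraph also misidentifies the relevant geometry. An Atkin--Lehner-type involution would give a degree-$2$ quotient, which is the wrong shape for controlling degree-$3$ points. What the paper actually uses is that $X_\Gamma=X_1(2,14)$ has genus~$4$ and therefore carries at most two $g^1_3$'s; it then exhibits both of them explicitly as quotient maps $q_H,q_{H'}\colon X_\Gamma\to\PP^1_\Q$ by two order-$3$ subgroups $H,H'$ of $(\Gamma_*(2)\cap\Gamma_*(7))/\Gamma$. These two maps give two disjoint copies of $\PP^1_\Q$ inside $\Sym^3 X_\Gamma$, and the Abel--Jacobi map $\phi\colon\Sym^3 X_\Gamma\to J_\Gamma$ is injective away from them. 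The paper then computes $J_\Gamma(\Q)\simeq C_2\times C_2\times C_6\times C_{18}$ (rank~$0$, order~$432$) and checks by a finite computation over~$\F_3$ that every point of $\phi\bigl((\Sym^3 X_\Gamma)(\Q)\bigr)$ is represented by a cuspidal divisor. Hence any non-cuspidal effective $\Q$-rational divisor of degree~$3$ is a fibre of $q_H$ or~$q_{H'}$, which forces $K$ to be cyclic cubic and lets one read off a $\Q$-model of~$E$ from the universal family over $X_*(7)$. There is no descent through an involution and no Chabauty over cubic fields; the whole argument happens over~$\Q$ via the symmetric cube.
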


We found examples of elliptic curves that are not base changes of elliptic curves over $\Q$ with all possible torsion groups over quadratic fields apart from $C_{16}$, and with all possible torsion groups over cubic fields apart from $C_2\times C_{14}$ and $C_{21}$. The case of $C_{21}$ is somewhat special: there is a unique curve over a cubic field with this torsion group, and the curve is a base change of an elliptic curve (with Cremona label 162b1) over $\Q$.  This curve was found by the second author \cite{naj} and was proved to be the only such curve in yet unpublished work of Derickx, Etropolski, Morrow and Zureick-Brown.

In Section 2 we prove Theorem \ref{theorem:quadratic}; in Section 3 we prove the more difficult Theorem \ref{theorem:cubic}. We deduce these results from geometric properties of modular curves and maps between modular curves, combinined with the modular description of these curves and maps.
For a congruence subgroup $\Gamma\subseteq\SL_2(\Z)$, let $X_\Gamma$ be the corresponding modular curve and let $Y_\Gamma$ be the complement of the cusps in $X_\Gamma$.

The idea of the proof of Theorem \ref{theorem:quadratic} is as follows. There exists a congruence subgroup $\Gamma'\subset\SL_2(\Z)$, containing $\Gamma_1(16)$ as a subgroup of index~$2$, such that all the degree~$2$ points on $Y_1(16)$ map to $\Q$-rational points of~$Y_{\Gamma'}$ under the natural morphism $X_1(16) \rightarrow X_{\Gamma'}$. The modular descriptions of $X_1(16)$ and~$X_{\Gamma'}$ then allow us to conclude that the points of degree~$2$ on $Y_1(16)$ in fact parameterize elliptic curves defined over~$\Q$.

Moving on to the setting of Theorem~\ref{theorem:cubic}, let $\Gamma=\Gamma_1(2,14)$. The proof of Theorem~\ref{theorem:cubic} follows the same lines, with the difference that in this case there will be two maps $q_H\colon X_\Gamma\rightarrow X_\Gamma/H$ and $q_{H'}\colon X_\Gamma\rightarrow X_\Gamma/H'$ of degree~3, where the quotient curves $X_\Gamma/H$ and $X_\Gamma/H'$ of genus~0 are constructed in Section~\ref{subec:3.1}. We prove that all cubic points of $Y_\Gamma$ are inverse images of rational points of $Y_\Gamma/H$ or $Y_\Gamma/H'$ under the maps $q_H$ and $q_{H'}$. Using an explicit equation for~$X_\Gamma$, we compute the group of $\Q$-points of the Jacobian of~$X_\Gamma$ (Proposition~\ref{prop_j2_14}) and describe the set of effective divisors of degree~$3$ on~$X_\Gamma$ (Proposition~\ref{prop:1}).  It is then not hard to deduce that all cubic points on $Y(\Gamma)$ arise from $\Q$-rational points on $Y_\Gamma/H$ or $Y_\Gamma/H'$.

\section{Elliptic curves with $16$-torsion over quadratic fields}

In this section we will prove Theorem~\ref{theorem:quadratic}.
We write
$$
\Gamma'=\left\{\begin{pmatrix} a & b \\ c & d \end{pmatrix}
\in \SL_2(\Z)\biggm|
\begin{aligned}
a\equiv d&\equiv 1 \pmod 8,\\
\noalign{\vskip-\jot}
c&\equiv 0 \pmod{16}
\end{aligned}
\right\}.
$$
The curves $X_1(16)$ and~$X_{\Gamma'}$ have genus $2$ and~$0$, respectively, and the map
$$
\pi\colon X_1(16)\rightarrow X_{\Gamma'}
$$
of degree~$2$ is a quotient map for the diamond automorphism $\diam{9}$ on $X_1(16)$.
It was already shown in \cite{bbdn} that all quadratic points on $Y_1(16)$ are inverse images under $\pi$ of $\Q$-rational points of $Y_{\Gamma'}$.

\begin{proof}[Proof of Theorem \ref{theorem:quadratic}]
Consider a point of $Y_1(16)(K)$ corresponding to a pair $(E,P)$, where $E$ is an elliptic curve over a quadratic field~$K$ and $P\in E(K)$ is a point of order~16. Let $\sigma$ be the generator of $\Gal(K/\Q)$.
Using the fact that the hyperelliptic involution on $X_1(16)$ is the diamond automorphism $\diam{9}$, it was proved in \cite[\S\,4.5]{bbdn} that there exists an isomorphism
$$
\mu\colon E^\sigma \isom E
$$
satisfying
$$
\mu \circ \mu^\sigma = \id
\quad\text{and}\quad
\mu(P^\sigma) = 9P.
$$
(This $\mu$ differs from the one in \cite{bbdn} by a sign.)  It follows, although this was not explicitly stated in~\cite{bbdn}, that $E$ can be descended to~$\Q$.  The isomorphism $\mu$ maps $(2P)^\sigma$ to $2(9P)=18P=2P$. Therefore not only $E$, but also the point $2P$ of order~$8$ is defined over~$\Q$.
\end{proof}

The above argument can be made explicit as follows.
The modular curve $X_1(16)$ admits the equation
$$
X_1(16)\colon v^2 - (u^3 + u^2 - u + 1)v + u^2 = 0.
$$
From \cite{bbdn}, it follows that all quadratic points $(u,v)$ on $Y_1(16)$ satisfy $u\in \Q$.
One can write down, in terms of the coordinates $(u, v)$, equations for the universal elliptic curve~$E$ and for the universal point~$P$ of order~$16$ on~$E$.
(The resulting equations are the same as those obtained by Rabarison \cite[Section 4.4]{rab}, up to a change of variables in the equation for~$X_1(16)$.)
One can then descend the pair $(E, 2P)$ to~$\Q$ by writing $E$ in Tate normal form with respect to the point $2P$.  This gives the Weierstrass equation
$$
E\colon y^2 + axy + by = x^3 + bx^2 \text{ with } 2P = (0, 0),
$$
where
$$
a = 1 - \frac{u^2(u - 1)(u + 1)}{u^2 + 1}
\quad\text{and}\quad
b = \frac{-u^2(u - 1)(u + 1)}{(u^2 + 1)^2}.
$$
Since these expressions do not contain $v$, we obtain a Weierstrass equation for~$E$ with coefficients in $\Q$.

\section{Elliptic curves with $(2,14)$-torsion over cubic fields}

Next, we take $\Gamma=\Gamma_1(2,14)=\Gamma(2)\cap\Gamma_1(7)$.
We will study $\Gamma$ and the corresponding modular curve $X_\Gamma$ using several auxiliary congruence subgroups.
Let $\Gamma_*(2)$ be the unique subgroup of $\SL_2(\Z)$ that contains $\Gamma(2)$ and such that $(\Gamma_*(2) : \Gamma(2)) = 3$; more precisely,
$$
\Gamma_*(2)=\left\{ \Gamma \in \SL_2(\Z) \biggm|
\gamma \equiv \begin{pmatrix} 1 & 0 \\ 0 & 1 \end{pmatrix},
\begin{pmatrix} 1 & 1 \\ 1 & 0 \end{pmatrix},
\begin{pmatrix} 0 & 1 \\ 1 & 1 \end{pmatrix}
\pmod 2 \right \}.
$$
We also define
$$
\Gamma_*(7)=\left\{ \begin{pmatrix} a & b \\ c & b \end{pmatrix}
\in \SL_2(\Z)\biggm|
\begin{aligned}
a,d&\equiv 1,2,4 \pmod 7,\\
\noalign{\vskip-\jot}
c&\equiv 0 \pmod 7
\end{aligned}
\right\}.
$$
We note that $\Gamma_*(7)$ does not contain the matrix $\smallmat{-1}{0}{0}{-1}$. It does have two conjugacy classes of elliptic elements of order~$3$, corresponding to two specific $\Gamma_*(7)$-structures on an elliptic curve with $j$-invariant 0.

The groups $\Gamma_*(2)$ and $\Gamma_*(7)$ contain $\Gamma(2)$ and $\Gamma_1(7)$, respectively, as normal subgroups of index~$3$.  We define $A_3$ and $C_3$ as the respective quotients $\Gamma_*(2)/\Gamma(2)$ and $\Gamma_*(7)/\Gamma_1(7)$, and we make the identifications
$$
\begin{aligned}
A_3&=(\Gamma_*(2) \cap \Gamma_1(7))/\Gamma,\\
C_3&=(\Gamma(2) \cap \Gamma_*(7))/\Gamma,\\
A_3\times C_3&=(\Gamma_*(2) \cap \Gamma_*(7))/\Gamma.
\end{aligned}
$$
The group $A_3\times C_3$ has four subgroups of order~$3$; besides $A_3$ and $C_3$, there are two further subgroups $H$ and~$H'$.

\subsection{Geometric properties of modular curves}
\label{subec:3.1}

The modular curve $X_\Gamma$ equals $X_1(2,14)$. Furthermore, the modular curve $X_{\Gamma_*(7)}$ is just $X_0(7)$, but we will denote it by $X_*(7)$ in view of the fact that it is defined using $\Gamma_*(7)$ instead of $\Gamma_0(7)$, which is essential to our method. We will also need the modular curves $X_1(7)$ and $X_1(14)$. The curves $X_*(7)$ and $X_1(7)$ have genus~$0$. The curve $X_1(14)$ has genus~$1$, and is isomorphic to the elliptic curve over~$\Q$ with Cremona label 14a4.

The group $\Gamma_*(7)/\Gamma$ acts on $X_\Gamma$.
The action of the various subgroups of interest gives rise to the following diagram, where the numbers next to the arrows indicate the degrees:
$$
\begin{tikzcd}[column sep=small]
\null & & & X_\Gamma \arrow{dlll}[swap]{2} \arrow{dll}{3} \arrow{dl}{3} \arrow{dr}[swap]{3} \arrow{drr}{3} \\
X_1(14) \arrow{dr}[swap]{3} & X_\Gamma/A_3 \arrow{d}{2} \arrow{drr}[swap]{3} & X_\Gamma/H\kern-1em \arrow{dr}{3} & & \kern-1em X_\Gamma/H' \arrow{dl}[swap]{3} & X_\Gamma/C_3 \arrow{dll}{3} \\
\null & X_1(7) \arrow{drr}[swap]{3} & & \kern-2em X_\Gamma/(A_3\times C_3)\kern-2em \arrow{d}{2} \\
\null & & & X_*(7)
\end{tikzcd}
$$

The \emph{index} of a cusp on a modular curve $X$ is the order of vanishing of the discriminant modular form, or equivalently the ramification index of the canonical map $X \rightarrow X(1)$, at this cusp.

\begin{lemma}
The curve $X_\Gamma$ has genus~$4$. It has $18$ cusps: $9$ of index~$2$ and $9$ of index~$14$.
\label{lemma_cusp}
\end{lemma}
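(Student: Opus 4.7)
The plan is to apply Riemann--Hurwitz to $X_\Gamma \to X(1)$, and then count the cusps via the degree-$2$ cover $X_\Gamma \to X_1(14)$. I would first compute $d := [\PSL_2(\Z) : \bar\Gamma]$ via CRT. Since $\SL_2(\Z/14) \cong \SL_2(\Z/2) \times \SL_2(\Z/7)$ and the image of $\Gamma$ in this product is $\{I\}$ times the order-$7$ upper-unipotent subgroup of $\SL_2(\Z/7)$, we get $[\SL_2(\Z):\Gamma] = 288$; since $-I \notin \Gamma_1(7) \supseteq \Gamma$, it follows that $d = 144$. A trace argument shows $\Gamma$ contains no nontrivial torsion elements: any finite-order element of $\SL_2(\Z)$ of order $>1$ has trace in $\{-2, -1, 0, 1\}$, none of which is $\equiv 2 \pmod 7$, whereas every element of $\Gamma_1(7)$ has trace $\equiv 2 \pmod 7$. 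Riemann--Hurwitz then gives $g(X_\Gamma) = 1 + d/12 - \nu_\infty/2 = 13 - \nu_\infty/2$, so the lemma reduces to showing there are $18$ cusps with the claimed indices.

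Next I observe that $\Gamma \subseteq \Gamma_1(14)$ with index $2$ (by CRT: an element of $\Gamma$ is automatically in $\Gamma_1(14)$, and the quotient $\Gamma_1(14)/\Gamma$ is detected by the parity of the upper-right entry). The curve $X_1(14)$ has $12$ cusps, three each of indices $1, 2, 7, 14$ (corresponding to $\gcd(c,14) \in \{14, 7, 2, 1\}$). For a cusp $a/c$ of $X_1(14)$ with $\gcd(a,c) = 1$ and index $w = 14/\gcd(c,14)$, the stabilizer generator in $\Gamma_1(14)$ can be written as
\[
\gamma T^w \gamma^{-1} = \smallmat{1 - acw}{a^2 w}{-c^2 w}{1 + acw},
\]
and this lies in $\Gamma$ iff it reduces to the identity modulo $2$. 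Since $\gcd(a,c) = 1$ forces at least one of $a, c$ to be odd, the mod-$2$ reduction is trivial iff $w$ is even. Hence the cusps of $X_1(14)$ of odd index (the six cusps of indices $1$ and $7$) are inert in $X_\Gamma \to X_1(14)$ with index doubling, while those of even index (the six cusps of indices $2$ and $14$) split into two preimages of the same index.

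Collecting the contributions, we get $3 + 6 = 9$ cusps of index $2$ and $3 + 6 = 9$ cusps of index $14$ in $X_\Gamma$, so $\nu_\infty = 18$ and $g(X_\Gamma) = 4$. The only nontrivial step is the inert-versus-split dichotomy, which is resolved by a single mod-$2$ computation, made easy by the fact that the difference between $\Gamma$ and $\Gamma_1(14)$ is exactly the $\Gamma(2)$-condition.
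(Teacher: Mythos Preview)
Your proof is correct and largely parallels the paper's argument: both analyze the degree-$2$ cover $X_\Gamma \to X_1(14)$, use that $X_1(14)$ has three cusps each of index $1,2,7,14$, and show that exactly the cusps of odd index $1$ and~$7$ ramify while those of even index split. The paper justifies the ramification pattern more conceptually (the $2$-torsion of the universal curve over $Y_1(14)$ is \'etale, so $X_\Gamma \to X_1(14)$ is unramified outside the cusps) and then applies Riemann--Hurwitz directly to this degree-$2$ map with $g(X_1(14))=1$; you instead compute the $\PSL_2$-index $144$, rule out elliptic points by the trace-mod-$7$ argument, and plug $\nu_\infty=18$ into the genus formula for $X_\Gamma \to X(1)$. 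Your explicit mod-$2$ computation with the stabilizer generator $\smallmat{1-acw}{a^2w}{-c^2w}{1+acw}$ in fact supplies the detail the paper leaves implicit about which cusps ramify, so the two arguments complement each other nicely.
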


\begin{proof}
The map $X_\Gamma\to X_1(14)$ of degree~$2$ is unramified over the open subset $Y_1(14)$; this follows for example from the fact that there is a universal elliptic curve over $Y_1(14)$ and that its $2$-torsion is \'etale. As for the cusps, for each $d \in \{1, 2, 7, 14\}$ there are three cusps of index $d$ on $X_1(14)$, and the above covering is ramified exactly above the six cusps of index $1$ or~$7$ on $X_1(14)$. The Hurwitz formula gives
$$
2g(X_\Gamma) - 2 = 2(2g(X_1(14)) - 2) + 6.
$$
Both statements now follow easily.
\end{proof}

\begin{lemma}
\label{lemma:free}
The groups $A_3$ and $C_3$ act freely on $X_\Gamma$.
\end{lemma}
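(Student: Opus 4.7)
The plan is to interpret freeness of the action as unramifiedness of the corresponding Galois cover. Write $\Delta_A = \Gamma_*(2) \cap \Gamma_1(7)$ and $\Delta_C = \Gamma(2) \cap \Gamma_*(7)$, so that $A_3 = \Delta_A/\Gamma$ and $C_3 = \Delta_C/\Gamma$; then $A_3$ (respectively $C_3$) acts freely on $X_\Gamma$ iff the degree-$3$ map $X_\Gamma \to X_{\Delta_A}$ (respectively $X_\Gamma \to X_{\Delta_C}$) is \'etale. Ramification can only occur at elliptic points or cusps, so the proof splits into showing that $\Delta_A$ and $\Delta_C$ are torsion-free (to rule out non-cuspidal fixed points) and that every cusp of $X_\Gamma$ has the same width in $\Gamma$ as in $\Delta_A$ (respectively $\Delta_C$).

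The first part is a trace argument. An elliptic element of $\SL_2(\Z)$ has trace in $\{-1,0,1\}$. Every element of $\Gamma_1(7)$ has trace $\equiv 2 \pmod 7$, which is incompatible; thus $\Delta_A \subseteq \Gamma_1(7)$ contains no elliptic elements. For $\Delta_C \subseteq \Gamma_*(7)$, a reduction mod~$7$ is upper triangular with diagonal entries $a, a^{-1}$ where $a \in \{1,2,4\}$, giving trace $\equiv a + a^{-1} \equiv 2$ or $-1 \pmod 7$; combined with $\{-1,0,1\}$ this forces trace $=-1$. However, membership in $\Gamma(2)$ requires $\Tr \equiv 0 \pmod 2$, excluding $-1$. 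Hence $\Delta_C$ is also torsion-free.

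The main obstacle, the cusp analysis, yields to a short divisibility argument. Fix a cusp $c$ of $X_\Gamma$, and let $w_\Gamma(c)$ denote its width (the index in the sense of the paper). Since $\Gamma$ is normal in $\Delta_A$, the second isomorphism theorem provides an injection of $\mathrm{Stab}_{\Delta_A}(c)/\mathrm{Stab}_\Gamma(c)$ into $\Delta_A/\Gamma \cong A_3$, so the order $w_\Gamma(c)/w_{\Delta_A}(c)$ of the former divides $|A_3| = 3$. By Lemma~\ref{lemma_cusp}, $w_\Gamma(c) \in \{2, 14\}$, neither of which is divisible by~$3$; therefore the ratio must be $1$, and $w_{\Delta_A}(c) = w_\Gamma(c)$. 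The same argument, with $\Delta_A$ replaced by $\Delta_C$ throughout, handles $C_3$. Combining everything, both quotient maps are \'etale, and the lemma follows.
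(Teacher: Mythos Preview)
Your proof is correct, but it takes a different route from the paper's, especially for $C_3$.

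For $A_3$, the paper argues that $Y_\Gamma$ is a fine moduli space (so the action on non-cuspidal points is automatically free) and then, for the cusps, uses exactly your observation that the cusp indices are coprime to~$3$. Your replacement of the fine-moduli step by the trace congruence ``every element of $\Gamma_1(7)$ has trace $\equiv 2\pmod 7$'' is a perfectly good elementary substitute.

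For $C_3$, the approaches diverge more substantially. The paper does not split into $Y_\Gamma$ and cusps at all: it observes that the $C_3$-action descends along the degree-$2$ map $X_\Gamma\to X_1(14)$ to the diamond automorphisms $\diam{9},\diam{11}$, which act on the elliptic curve $X_1(14)$ as translations by nonzero $3$-torsion points and are therefore fixed-point free everywhere; freeness on $X_\Gamma$ follows. Your argument instead treats $C_3$ in parallel with $A_3$: the trace computation in $\Gamma_*(7)$ combined with the parity constraint from $\Gamma(2)$ rules out elliptic elements in $\Delta_C$, and the cusp-width divisibility finishes the job.

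What each buys: the paper's $C_3$ argument is slicker and handles cusps and non-cusps in one stroke, but it relies on the happy accident that $X_1(14)$ is an elliptic curve. Your argument is more uniform (the same template works for both $A_3$ and $C_3$) and entirely group-theoretic, needing only Lemma~\ref{lemma_cusp} as input; it would transfer more readily to situations where no intermediate quotient is an elliptic curve.
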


\begin{proof}
The action of the group $C_3$ on~$X_\Gamma$ descends to an action on $X_1(14)$ via the group of diamond automorphisms $\{\diam{1},\diam{9},\diam{11}\}$. Under any identification of $X_1(14)$ with an elliptic curve, the automorphisms $\diam{9}$ and~$\diam{11}$ act as translations by $3$-torsion points and hence have no fixed points.
It follows that $C_3$ acts freely on $X_1(14)$, and hence also on~$X_\Gamma$.

The group $A_3$ acts freely on $Y_\Gamma$ because $Y_1(7)$ is a fine moduli space. The cusps also have trivial stabilizer; this follows from the fact that the indices of all cusps are coprime to the order of~$A_3$.
\end{proof}

\begin{corollary}
The quotient maps
\begin{align*}
X_\Gamma&\rightarrow X_\Gamma/A_3,\\
X_\Gamma&\rightarrow X_\Gamma/C_3
\end{align*}
are unramified. Each of the curves $X_\Gamma/A_3$ and $X_\Gamma/C_3$ has $6$ cusps: $3$ of index $2$ and $3$ of index $14$. Both curves have genus~$2$.
\end{corollary}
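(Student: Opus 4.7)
The plan is to read everything off the preceding lemma. Since $A_3$ and $C_3$ act freely on $X_\Gamma$, each quotient map is an étale cover of degree~$3$, which immediately gives the unramifiedness claim.

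For the cusps, I would argue as follows. The action of $A_3$ (respectively $C_3$) permutes the $18$ cusps of $X_\Gamma$, and since the action on all of $X_\Gamma$ is free, every orbit on the cusp set has size exactly~$3$. Hence the quotient has $18/3 = 6$ cusps. Moreover, the index of a cusp is by definition its ramification index in the composed map to $X(1)$, so it is preserved in a compatible way under an unramified quotient: each cusp downstairs has index equal to the common index of the three cusps above it. Therefore the $9$ cusps of index $2$ on $X_\Gamma$ give $3$ cusps of index $2$ on the quotient, and the $9$ cusps of index $14$ give $3$ cusps of index $14$, as claimed.

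Finally, the genus follows from Riemann--Hurwitz applied to the unramified degree-$3$ cover:
$$
2g(X_\Gamma) - 2 = 3\bigl(2g(X_\Gamma/A_3) - 2\bigr),
$$
and similarly with $C_3$. Using $g(X_\Gamma) = 4$ from Lemma~\ref{lemma_cusp}, we get $6 = 3(2g-2)$, whence $g(X_\Gamma/A_3) = g(X_\Gamma/C_3) = 2$.

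There is no real obstacle here; the only point requiring a moment's thought is the compatibility of the cusp index with an unramified quotient, but this is automatic from the multiplicativity of ramification indices in the tower $X_\Gamma \to X_\Gamma/A_3 \to X(1)$ (and likewise for $C_3$).
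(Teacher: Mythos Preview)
Your proof is correct and follows exactly the same approach as the paper, which simply states that the first two claims are immediate from the lemma on free actions and that the genus follows from the Hurwitz formula. You have merely made explicit the details (orbit counting on cusps, multiplicativity of ramification indices in the tower to $X(1)$, and the Riemann--Hurwitz computation) that the paper leaves to the reader.
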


\begin{proof}
The first two statements are immediate from Lemma~\ref{lemma:free}; the last one follows from the Hurwitz formula.
\end{proof}

\begin{lemma}
\begin{enumerate}
\item The curve $X_\Gamma/(A_3 \times C_3)$ has genus~$0$. It has two cusps: one of index $2$ and one of index $14$.
\label{lemma:quotient1}
\item The map $X_\Gamma\rightarrow X_\Gamma/(A_3\times C_3)$ has ramification index~$3$ at $12$ points of $X_\Gamma$ (lying above $4$ points of $X_\Gamma/(A_3\times C_3)$) and is unramified everywhere else.
\label{lemma:quotient2}
\end{enumerate}
\label{lemma:quotient}
\end{lemma}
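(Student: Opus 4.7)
The plan is to determine the ramification of the quotient map $\pi\colon X_\Gamma \to X_\Gamma/(A_3\times C_3)$ and then apply the Riemann--Hurwitz formula. Since $A_3\times C_3\cong(\Z/3)^2$ has exactly four order-$3$ subgroups, namely $A_3$, $C_3$, $H$, $H'$, and the previous lemma shows that $A_3$ and $C_3$ act freely, every non-trivial stabilizer on $X_\Gamma$ must equal $H$ or $H'$ and in particular has order $3$.

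\emph{Cusps.} By Lemma~\ref{lemma_cusp}, each cusp of $X_\Gamma$ has index $2$ or $14$. The order of the stabilizer of a cusp in $A_3\times C_3$ equals the ramification index of $\pi$ at that cusp, and so divides both the cusp index and $|A_3\times C_3|=9$; since $\gcd(9,2)=\gcd(9,14)=1$, every such stabilizer is trivial. Thus the nine cusps of each index form a single $A_3\times C_3$-orbit, and $X_\Gamma/(A_3\times C_3)$ has exactly two cusps, of indices $2$ and $14$.

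\emph{Elliptic points.} A point of $Y_\Gamma$ with non-trivial stabilizer corresponds to a pair $(E,((P_1,P_2),P_3))$ with $j(E)=0$, admitting an automorphism $\omega\in\Aut(E)$ of order $3$ carrying $((P_1,P_2),P_3)$ to an $(A_3\times C_3)$-translate of itself. On $E[2]$ the automorphism $\omega$ induces a $3$-cycle on the non-zero $2$-torsion points, so acts on any basis $(P_1,P_2)$ by an element of $A_3$. On $E[7]$, $\omega\in\Z[\omega]=\operatorname{End}(E)$ reduces to an endomorphism with eigenvalues $2$ and $4$ in $\F_7$ (the primitive cube roots of unity), with one-dimensional eigenspaces $V_2$ and $V_4$; hence $\omega P_3\in C_3\cdot P_3$ if and only if $P_3\in V_2\cup V_4\setminus\{0\}$. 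The remaining automorphisms $-1,-\omega,-\omega^2$ would force $c\in\{3,5,6\}\pmod 7$, which is disjoint from $C_3=\{1,2,4\}$, so they contribute nothing. This yields $6\cdot 12=72$ such structures on $E$, and quotienting by the free action of $\Aut(E)=\Z/6$ (the elements $\omega^i$ permute the $2$-torsion basis and $-1$ negates $P_3$) gives $12$ points of $Y_\Gamma$ with non-trivial stabilizer. Each has stabilizer of order $3$, so lies in an $(A_3\times C_3)$-orbit of size $3$; the $12$ points therefore map to $4$ points on $X_\Gamma/(A_3\times C_3)$. This proves (ii).

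For (i), combining the two sources of ramification and applying Riemann--Hurwitz to $\pi$ gives
$$
2g(X_\Gamma)-2 = 9\bigl(2g(X_\Gamma/(A_3\times C_3))-2\bigr) + 12\cdot(3-1),
$$
and substituting $g(X_\Gamma)=4$ from Lemma~\ref{lemma_cusp} forces $g(X_\Gamma/(A_3\times C_3))=0$. The main technical step is the modular analysis of the $\omega$-action on $E[7]$ -- in particular identifying the two eigenspaces and verifying that among the six elements of $\Aut(E)$ only $\omega^{\pm 1}$ can produce stabilizer elements in $A_3\times C_3$. Once this is in hand, the orbit count and the Hurwitz computation follow mechanically.
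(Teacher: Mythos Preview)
Your proof is correct but follows a genuinely different route from the paper's.  The paper argues structurally, using the tower of modular curves: it first observes that $X_\Gamma/A_3 \to X_1(7)$ is unramified over $Y_1(7)$ (fine moduli), deduces that $X_\Gamma/(A_3\times C_3) \to X_*(7)$ is unramified outside the cusps, and concludes from Hurwitz and $g(X_*(7))=0$ that $g(X_\Gamma/(A_3\times C_3))=0$ with two cusps.  Only then does it extract the ramification count for part~(ii), by applying Hurwitz to the degree-$3$ cyclic map $X_\Gamma/A_3 \to X_\Gamma/(A_3\times C_3)$ (now knowing both genera) to get four branch points, and pulling back along the unramified map $X_\Gamma \to X_\Gamma/A_3$.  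You instead reverse the logic: you count the ramification points of $X_\Gamma \to X_\Gamma/(A_3\times C_3)$ directly via an explicit analysis of the $\omega$-action on $E[2]$ and $E[7]$ for the $j=0$ curve, obtain the twelve points with stabilizer $H$ or $H'$, and then read off the genus from Hurwitz.  Your approach is more hands-on and yields extra information (it tells you exactly which moduli points are ramified, namely those with $P_3$ in one of the two $\omega$-eigenlines in $E[7]$), at the cost of the eigenvalue computation; the paper's approach is slicker, needing no arithmetic in $\F_7$, but leaves the location of the ramification points implicit.  One small point worth making explicit in your write-up: the claim that the ramification index of $\pi$ at a cusp divides the cusp index relies on the factorization $X_\Gamma \to X_\Gamma/(A_3\times C_3) \to X(1)$ and multiplicativity of ramification indices, which you use tacitly.
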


\begin{proof}
The map $X_\Gamma/A_3 \rightarrow X_1(7)$ is unramified over $Y_1(7)$ by Lemma~\ref{lemma:free}, so the map $X_\Gamma/(A_3\times C_3) \rightarrow X_*(7)$ is unramified outside the cusps. Since $X_*(7)$ has genus~$0$, the Hurwitz formula implies that this last map is ramified above the two cusps of $X_*(7)$ and that $X_\Gamma/(A_3 \times C_3)$ has genus~$0$.  This proves (\ref{lemma:quotient1}).

By the Hurwitz formula and the fact that the map
$X_\Gamma/A_3\rightarrow X_\Gamma/(A_3\times C_3)$
is cyclic of degree~$3$, this map is totally ramified at 4 points.  The claim (\ref{lemma:quotient2}) now follows from the fact that the map $X_\Gamma\rightarrow X_\Gamma/A_3$ is unramified (the same argument works for $C_3$).
\end{proof}

\begin{lemma}
The curves $X_\Gamma/H$ and $X_\Gamma/H'$ have genus~$0$.
\end{lemma}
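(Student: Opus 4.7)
The plan is to apply the Riemann--Hurwitz formula to the two degree-$3$ quotient maps $X_\Gamma/H \to X_\Gamma/(A_3\times C_3)$ and $X_\Gamma/H' \to X_\Gamma/(A_3\times C_3)$, and to observe that their ramification data fit together so tightly that both genera are forced to vanish.

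First I would pin down the ramification of the full quotient map. Since $A_3\times C_3$ is abelian, each of the $12$ ramified points of $X_\Gamma \to X_\Gamma/(A_3\times C_3)$ from Lemma~\ref{lemma:quotient} has a well-defined stabilizer of order~$3$ in $A_3\times C_3$, which must be one of $A_3$, $C_3$, $H$, $H'$; since $A_3$ and $C_3$ act freely, the stabilizer is $H$ or $H'$. Write $a$ and $b$ for the number of the $4$ branch points in $X_\Gamma/(A_3\times C_3)$ whose stabilizer is $H$ and $H'$, respectively, so $a+b=4$.

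Next I would chase the ramification through the tower $X_\Gamma \to X_\Gamma/H \to X_\Gamma/(A_3\times C_3)$. A branch point with stabilizer $H$ has three $H$-fixed preimages in $X_\Gamma$, each its own $H$-orbit, giving three distinct unramified preimages in $X_\Gamma/H$. A branch point with stabilizer $H'$ has three preimages on which $H$ acts freely (as $H\cap H'=\{1\}$), so they form a single $H$-orbit and descend to a single point of $X_\Gamma/H$; since $H\cdot H' = A_3\times C_3$, this point is fixed by the entire quotient group $(A_3\times C_3)/H$ and is therefore totally ramified over $X_\Gamma/(A_3\times C_3)$. Consequently the degree-$3$ map $X_\Gamma/H \to X_\Gamma/(A_3\times C_3)$ is totally ramified at exactly $b$ points, and Riemann--Hurwitz (together with $g(X_\Gamma/(A_3\times C_3))=0$) gives $g(X_\Gamma/H) = b-2$. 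Symmetrically, $g(X_\Gamma/H') = a-2$.

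Summing yields $g(X_\Gamma/H)+g(X_\Gamma/H') = a+b-4 = 0$, and since both genera are non-negative they must both vanish. The only delicate step is the ramification bookkeeping in the tower; the pleasant feature of this approach is that one never needs to determine $a$ and $b$ individually, since non-negativity of the genera automatically forces $a=b=2$ a posteriori.
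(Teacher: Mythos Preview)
Your proof is correct and uses the same core idea as the paper: the stabilizer of each of the $12$ ramified points of $X_\Gamma \to X_\Gamma/(A_3\times C_3)$ must be $H$ or $H'$, and Riemann--Hurwitz applied twice and summed forces $g(X_\Gamma/H)+g(X_\Gamma/H')=0$. The only difference is which maps you feed into Riemann--Hurwitz. You use the bottom maps $X_\Gamma/H \to X_\Gamma/(A_3\times C_3)$ and $X_\Gamma/H' \to X_\Gamma/(A_3\times C_3)$, which requires the tower bookkeeping you carry out (correctly). The paper instead applies Riemann--Hurwitz directly to the top maps $X_\Gamma \to X_\Gamma/H$ and $X_\Gamma \to X_\Gamma/H'$: the ramification of $X_\Gamma \to X_\Gamma/H$ is exactly at the $n_H$ points fixed by~$H$, so $2g(X_\Gamma)-2 = 3(2g(X_\Gamma/H)-2) + 2n_H$, and similarly for $H'$; adding and using $n_H+n_{H'}=12$ and $g(X_\Gamma)=4$ gives the conclusion with no tower analysis needed. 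Your variant is a perfectly valid alternative, just slightly longer.
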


\begin{proof}
Let $P$ be one of the 12 ramification points of the map $X_\Gamma\to X_\Gamma/(A_3\times C_3)$. Then the stabilizer $G_P$ of~$P$ in $A_3 \times C_3$ is of order~$3$ and different from $A_3$ and $C_3$ since the latter two groups act freely on $X_\Gamma$ by Lemma~\ref{lemma:free}. Therefore $G_P$ is either $H$ or~$H'$. Let $n_H$ be the number of points $P \in X_\Gamma$ with stabilizer~$H$, and similarly for~$n_{H'}$. The Hurwitz formula gives
$$
2g(X_\Gamma) - 2 = 3(g(X_\Gamma/H) - 2) + 2n_H,
$$
$$
2g(X_\Gamma) - 2 = 3(g(X_\Gamma/H') - 2) + 2n_{H'}.
$$
Adding the two equations and using $g(X_\Gamma) = 4$ and $n_H + n_{H'} = 12$, we get $g(X_\Gamma/H) + g(X_\Gamma/H') = 0$, which implies
the claim.
\end{proof}

We conclude that the curve $X_\Gamma$ admits two maps of degree~$3$ to a curve of genus~$0$, namely the quotient maps
$$
q_H\colon X_\Gamma \rightarrow X_\Gamma/H,\quad
q_{H'}\colon X_\Gamma \rightarrow X_\Gamma/{H'}.
$$
By construction, both are cyclic with Galois groups $H$ and~$H'$, respectively.
Pull-back of divisors along the two maps $q_H$ and $q_{H'}$ gives rise to two lines $L$ and $L'$ (copies of~$\PP^1_\Q$) inside $\Sym^3 X_\Gamma$.
Both maps are ramified at exactly 6 points, and the two sets of 6 points are disjoint because of Lemma \ref{lemma:quotient}(\ref{lemma:quotient2}). This implies that $X_\Gamma$ embeds as a smooth curve of bidegree $(3, 3)$ in $X_\Gamma/H \times X_\Gamma/H' \simeq \PP^1_\Q \times \PP^1_\Q$, and that $L$ and~$L'$ are disjoint.
Furthermore, because a curve of genus~$4$ admits at most two linear systems of degree~$3$ and dimension~$1$ (see \cite[IV, Example~5.2.2]{har}), every non-constant map $X_\Gamma \rightarrow \PP^1_\Q$ of degree~$3$ can be identified with either $q_H$ or~$q_{H'}$
via an isomorphism of $\PP^1_\Q$ with $X_\Gamma/H$ or $X_\Gamma/H'$, respectively.

We fix one rational cusp, say $O=(0,0)$, and we consider the Jacobian $J_\Gamma$ of $X_\Gamma$ and the (non-dominant) rational map
\begin{equation}
\begin{aligned}
\phi\colon\Sym^3 X_\Gamma &\rightarrow J_\Gamma \\
D &\mapsto [D-3O]
\end{aligned}
\label{eq:phi}
\end{equation}

\begin{lemma}
  The map $\phi$ contracts the lines $L$ and $L'$ and is injective outside $L\cup L'$.
\end{lemma}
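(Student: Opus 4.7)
The plan is to exploit the basic fact that two effective divisors $D_1, D_2 \in \Sym^3 X_\Gamma$ have the same image under $\phi$ if and only if they are linearly equivalent, so that the fiber of $\phi$ through $D$ is precisely the complete linear system $|D|$. The question thus reduces to computing $h^0(\mathcal O_{X_\Gamma}(D))$ for $D$ of degree~$3$.

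First I would handle the contraction: by construction, $L$ is the complete linear system $|q_H^*\mathcal{O}_{\PP^1}(1)|$, so every divisor parametrized by $L$ is linearly equivalent to the fiber of $q_H$ over~$O$ (or to any other fiber), and hence $\phi$ sends $L$ to a single point of $J_\Gamma$. The argument for $L'$ is identical.

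For injectivity outside $L\cup L'$, I would take $D$ effective of degree~$3$ and use Riemann--Roch together with Clifford's theorem. Since $\deg D = 3 = g - 1$, Riemann--Roch gives $h^0(D) = h^0(K-D)$, and Clifford's theorem forces $h^0(D) \le 2$. Combined with the obvious inequality $h^0(D) \ge 1$, we get $h^0(D) \in \{1, 2\}$. If $h^0(D) = 2$, then $|D|$ is a $g^1_3$ and defines a degree-$3$ morphism $X_\Gamma \to \PP^1_\Q$; by the remark preceding the lemma (appealing to \cite[IV, Example~5.2.2]{har}), this morphism must coincide, up to isomorphism of the target, with either $q_H$ or $q_{H'}$, which forces $|D| = L$ or $|D| = L'$ and so $D \in L\cup L'$. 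Contrapositively, if $D \notin L\cup L'$, then $h^0(D) = 1$, i.e.\ $|D| = \{D\}$, and therefore $\phi^{-1}(\phi(D)) = \{D\}$.

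The only real subtlety is making sure that when $h^0(D) = 2$ the resulting $g^1_3$ really is identified with one of the two linear systems coming from $q_H, q_{H'}$; this is precisely the input provided by the uniqueness-up-to-two statement cited from Hartshorne and applied in the paragraph immediately preceding the lemma. Everything else is a direct application of Riemann--Roch and Clifford, so no further computation is needed.
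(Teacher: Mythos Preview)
Your proof is correct and follows essentially the same route as the paper: both reduce to showing that any non-trivial linear equivalence among effective degree-$3$ divisors forces the associated linear system to be one of the two $g^1_3$'s classified just before the lemma. The paper phrases this by taking $D\ne D'$ with $[D]=[D']$, producing $f$ with $\div(f)=D-D'$, and noting that $f$ gives a map of degree at most~$3$ which (since $X_\Gamma$ is not hyperelliptic) must be $q_H$ or~$q_{H'}$; you phrase it via Riemann--Roch/Clifford to get $h^0(D)\le 2$ and then invoke the same classification. One small point you leave implicit: when $h^0(D)=2$, the system $|D|$ defines a degree-$3$ morphism only if it is base-point-free, and ruling out a base point uses that $X_\Gamma$ is not hyperelliptic---the paper states this explicitly, and you should too.
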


\begin{proof}
  Consider two distinct points of $\Sym^3 X_\Gamma$ corresponding to effective divisors $D$, $D'$ of degree~$3$ on $X_\Gamma$.  Then $\phi(D)=\phi(D')$ if and only if $D$ and~$D'$ are linearly equivalent.  In this case, there exists a rational function $f$ on~$X_\Gamma$ with divisor $D-D'$.  Such an $f$ gives a map of degree at most $3$ to $\PP^1_\Q$; this can be identified with either $q_H$ or $q_{H'}$, since $X_\Gamma$ is not hyperelliptic.  This implies that $\phi(D)=\phi(D')$ if and only if either both $D$ and~$D'$ are pull-backs of points under~$q_H\colon X_\Gamma\to X_\Gamma/H$, or both are pull-backs of points under $q_{H'}\colon X_\Gamma\to X_\Gamma/H'$.
\end{proof}

\begin{proposition}
The modular curve $X_\Gamma$ is isomorphic to the smooth projective curve of bidegree $(3,3)$ in $\PP^1_\Q\times\PP^1_\Q$ given by the equation
\begin{equation}
X_\Gamma\colon (u^3 + u^2 - 2u - 1)v(v + 1)
+ (v^3 + v^2 - 2v - 1)u(u + 1) = 0.
\label{eq:X_Gamma}
\end{equation}
The $(u,v)$-coordinates of the $9$ rational cusps are
$$
\displaylines{
(0,0),\quad
(0,-1),\quad
(0,\infty),\cr
(-1,0),\quad
(-1,-1),\quad
(-1,\infty),\cr
(\infty,0),\quad
(\infty,-1),\quad
(\infty,\infty).}
$$
The $9$ cusps with field of definition $\Q(\zeta_7)^+$ are defined by
$$
u^3 + u^2 - 2u - 1 = v^3 + v^2 - 2v - 1 = 0.
$$
\end{proposition}

Our initial proof of the above proposition proceeded by viewing $X_\Gamma$ as the $S_3$-cover of the curve $X_1(7)$ corresponding to the moduli problem of labelling the three points of order~2 by the set $\{0,1,2\}$.  As this proof involves rather long calculations, we do not give it here, but refer to the independent derivation of the above equation for $X_\Gamma$ by Derickx and Sutherland \cite[\S\,3.1]{ds}.

\subsection{Proof of the main result}

We first determine the structure of $J_\Gamma(\Q)$.
As $X_\Gamma$ is a non-hyperelliptic genus 4 curve, note that $J_\Gamma(\Q)$ cannot be computed directly in any current computer algebra system.

\begin{proposition}
\label{prop_j2_14}
The group $J_\Gamma(\Q)$ is generated by differences of rational cusps and is isomorphic to $C_2\times C_2\times C_6 \times C_{18}$.
\end{proposition}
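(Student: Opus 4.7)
The plan is to squeeze $J_\Gamma(\Q)$ between a lower bound coming from rational cuspidal divisors and an upper bound obtained by reducing modulo primes of good reduction, after first showing that the Mordell--Weil rank of $J_\Gamma$ over $\Q$ is zero.

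For the lower bound, fix $O = (0,0)$ as basepoint and let $C \subseteq J_\Gamma(\Q)$ be the subgroup generated by $[c - O]$ as $c$ ranges over the other eight rational cusps. All nine rational cusps have $(u,v)$-coordinates in $\{0, -1, \infty\} \times \{0, -1, \infty\}$, so the divisors of the four rational functions $u,\ u+1,\ v,\ v+1$ on $X_\Gamma$ are supported on these nine points and can be read off directly from~(\ref{eq:X_Gamma}) by intersecting with the lines $u = 0,\ u = -1,\ v = 0,\ v = -1$ together with their limits at infinity. Encoding the resulting four principal divisors as columns of a $9\times 4$ integer matrix and computing its Smith normal form should identify $C$ as an abelian group with invariant factors $(2,2,6,18)$, of order $432$.

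For the upper bound, I first argue that $J_\Gamma(\Q)$ has rank zero. The $A_3 \times C_3$-action on $X_\Gamma$, combined with the fact that $X_\Gamma/H$, $X_\Gamma/H'$ and $X_\Gamma/(A_3 \times C_3)$ all have genus zero, yields an isogeny
$$
J_\Gamma \sim J(X_\Gamma/A_3) \times J(X_\Gamma/C_3)
$$
defined over~$\Q$, with both factors of dimension $2$; indeed, the pullbacks of $J(X_\Gamma/A_3)$ and $J(X_\Gamma/C_3)$ into $J_\Gamma$ intersect in the pullback from $J(X_\Gamma/(A_3\times C_3)) = 0$, so their sum has dimension $2+2 = 4 = \dim J_\Gamma$. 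Each $2$-dimensional factor is a modular abelian variety for a congruence subgroup containing~$\Gamma$, and decomposes further (either over~$\Q$ or after base change to~$\Q(\sqrt{-3})$, to accommodate the residual $C_3$- or $A_3$-action) into a product of elliptic curves of small conductor; a direct inspection of Cremona's tables, or equivalently a check that the relevant $L$-values at~$s = 1$ are nonzero so that Kolyvagin--Kato forces rank~$0$, disposes of the rank-zero claim. With $J_\Gamma(\Q)$ now finite, for any odd prime $p \notin \{2, 7\}$ of good reduction the injection $J_\Gamma(\Q) \hookrightarrow J_\Gamma(\F_p)$ gives the desired upper bound: counting points on~(\ref{eq:X_Gamma}) at a handful of small primes (say $p = 3,\,5,\,11,\,13$) and computing $|J_\Gamma(\F_p)|$ from the numerator of the zeta function should pin the torsion down to exactly $432$.

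The main technical obstacle is the explicit identification of the simple isogeny factors of $J_\Gamma$ with specific rank-zero elliptic curves, which requires extracting equations for the quotient curves $X_\Gamma/A_3$ and $X_\Gamma/C_3$ from the model~(\ref{eq:X_Gamma}) and recognizing their Jacobians in standard tables. The lower-bound calculation via cuspidal divisors on~(\ref{eq:X_Gamma}) and the modular reductions are mechanical by comparison.
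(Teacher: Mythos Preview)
Your overall architecture (cuspidal lower bound, rank zero, upper bound via reduction) is the same as the paper's, but the lower-bound step as you describe it does not work.

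The nine rational cusps span a free group of divisors of rank~$9$, and the degree-zero part has rank~$8$.  The four functions $u$, $u+1$, $v$, $v+1$ give four principal divisors
\[
\begin{aligned}
\div(u) &= \textstyle\sum_j (0,j) - \sum_j(\infty,j),\qquad
&\div(u+1) &= \textstyle\sum_j (-1,j) - \sum_j(\infty,j),\\
\div(v) &= \textstyle\sum_i (i,0) - \sum_i(i,\infty),\qquad
&\div(v+1) &= \textstyle\sum_i (i,-1) - \sum_i(i,\infty),
\end{aligned}
\]
and a short check shows these are $\Z$-linearly independent.  Hence the quotient of $\Z^8$ by their span has free rank~$4$ and is infinite; its Smith normal form cannot possibly be $(2,2,6,18)$.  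What this computation produces is a group that \emph{surjects} onto the cuspidal subgroup $C$, so it gives an upper bound on $|C|$, not the lower bound you need.  To pin $C$ down by this route you would have to exhibit eight independent rational functions on $X_\Gamma$ with divisor supported on the nine rational cusps, and there is no obvious way to read these off from~\eqref{eq:X_Gamma}.

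The paper sidesteps this by computing the images of the cuspidal classes directly in $J_\Gamma(\F_3)$ using Khuri-Makdisi's algorithms, verifying there that four specific order-$2$ classes and two specific $3$-power classes generate a subgroup isomorphic to $(\Z/2\Z)^4\times\Z/9\Z\times\Z/3\Z$; since $|J_\Gamma(\F_3)|=432$ exactly, this simultaneously gives the lower and upper bounds and the group structure.

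Your rank-zero argument via $J_\Gamma\sim J(X_\Gamma/A_3)\times J(X_\Gamma/C_3)$ is a legitimate alternative to the paper's decomposition $J_\Gamma\sim J_1(14)^2\times B$ (coming instead from the degree-$2$ map $X_\Gamma\to X_1(14)$ and the genus-$2$ curve of~\eqref{eq:t^2=f(d)}); both lead to rank zero, though in your version the further splitting of the two genus-$2$ factors into elliptic curves over $\Q$ or $\Q(\sqrt{-3})$ still needs to be made explicit before you can invoke tables or Kolyvagin--Kato.
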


\begin{proof}
The modular Abelian variety $J_\Gamma$ over~$\Q$ decomposes up to isogeny as $J_\Gamma\sim E\times E \times B$, where $E$ is an elliptic curve and $B$ is an Abelian surface.
A computation with newforms in either Magma or Sage \cite{sage} shows that the $L$-functions of $E$ and~$B$ do not vanish at~$1$.  By results of Kato \cite{kato}, the Birch--Swinnerton-Dyer conjecture is true for modular Abelian varieties.  We conclude that $J_\Gamma(\Q)$ has rank~$0$.

Let $\red_3$ denote the reduction map $J_\Gamma(\Q)\to J_\Gamma(\F_3)$.  Then $\red_3$ is injective.  One computes the numerator of the zeta function of $X_\Gamma$ over $\F_3$ to be $1 + 5x + 12x^{2} + 17x^{3} + 22x^{4} + 51x^{5} + 108x^{6} + 135x^{7} + 81x^{8}$.  Looking at the coefficient of~$x$, we obtain $\#X_\Gamma(\F_3) = 1 + 5 + 3 = 9$; substituting $x=1$, we obtain $\#J_\Gamma(\F_3)=432 = 2^4 \cdot 3^3$.  We deduce that $\#J_\Gamma(\Q)$ divides $432$.

Let $A$ be the subgroup of $J_\Gamma(\Q)$ generated by all differences of two rational cusps.
Then $A$ can be written as $A_2\times A_3$, where $A_2$ and $A_3$ are the 2-primary and 3-primary subgroups of $A$, respectively, and it suffices to compute $A_2$ and $A_3$.  The above bound on $\#J_\Gamma(\Q)$ implies that $\#A_2$ divides $2^4$ and $\#A_3$ divides $3^3$.
We claim that there are isomorphisms
$$
(\Z/2\Z)^4\isom A_2,\quad
\Z/3\Z\times\Z/9\Z\isom A_3.
$$
We will prove this by computing the images of $A_2$ and~$A_3$ under $\red_3$.

To compute in $J_\Gamma(\F_3)$, we use Khuri-Makdisi's algorithmic framework for computing in Picard groups of projective curves \cite{km1,km2}.  For a curve $X$ over a field $k$, with Jacobian $J$, this gives us a way to represent elements of $J(k)\simeq\Pic^0 X$ and algorithms to perform the following operations:
\begin{itemize}
\item given two points $P,Q\in X(k)$, compute the divisor class $[P-Q]\in J(k)$;
\item given two elements $x,y\in J(k)$, compute $-x-y$ (which also allows us to perform addition and negation);
\item given an element $x\in J(k)$, test whether $x$ is the zero element (which also allows us to test whether two elements are equal);
\item given elements $x\in J(k)$ and $O\in X(k)$, compute the least $r\ge0$ such that $x$ is of the form $[D-rO]$ for some effective divisor $D$ of degree~$r$.
\end{itemize}
We used an unpublished implementation of Khuri-Makdisi's algorithms over finite fields by the first named author in PARI/GP \cite{pari}.  For this we need to determine the space of global sections of a line bundle of sufficiently high degree.  Starting from the equation \eqref{eq:X_Gamma} and using the line bundle $\mathcal{O}_{X_\Gamma}(2((0,\infty) + (-1,\infty) + (\infty,0) + (\infty,-1) + (\infty,\infty)))$ of degree~$10$, we obtain the basis $(1, u, v, uv, u^2, v^2, uv(u + v))$ for the space of global sections.

For every point $P\in X_\Gamma(\F_3)$, we consider the corresponding point $[P-(0,0)]\in J_\Gamma(\F_3)$.
We define the following elements of $J_\Gamma(\F_3)$:
$$
\begin{aligned}
  x_1 &= 9[(0,-1)-(0,0)], \\
  x_2 &= 9[(0,\infty)-(0,0)], \\
  x_3 &= 9[(-1,0)-(0,0)], \\
  x_4 &= 9[(-1,-1)-(0,0)], \\
\end{aligned}
\qquad
\begin{aligned}
  y_1 &= 2[(-1,0)-(0,0)], \\
  y_2 &= 2[(-1,-1)-(0,0)]. \\ \\ \\
\end{aligned}
$$
Then the points $x_i$ have order~$2$, the point $y_1$ has order~$9$, and the point~$y_2$ has order~$3$.
We consider the group homomorphisms
$$
\begin{aligned}
  \lambda_2\colon (\Z/2\Z)^4 &\longrightarrow \red_3(A_2)\cr
  (a_1,a_2,a_3,a_4) &\longmapsto \sum_{i=1}^4 a_i x_i.
\end{aligned}
$$
and
$$
\begin{aligned}
  \lambda_3\colon \Z/9\Z\times\Z/3\Z &\longrightarrow \red_3(A_3)\cr
  (b_1,b_2) &\longmapsto b_1 y_1 + b_2 y_2.
\end{aligned}
$$
These fit in the following commutative diagrams:
$$
\begin{tikzcd}
(\Z/2\Z)^4 \arrow{r} \arrow{dr}[swap]{\lambda_2} & A_2 \arrow{d}{\red_3} \\
& \red_3(A_2)
\end{tikzcd}
\qquad
\begin{tikzcd}
\Z/9\Z\times\Z/3\Z \arrow{r} \arrow{dr}[swap]{\lambda_3} & A_3 \arrow{d}{\red_3} \\
& \red_3(A_3)
\end{tikzcd}
$$
where the vertical maps $A_2\to\red_3(A_2)$ and $A_3\to\red_3(A_3)$ are isomorphisms.
We show that $\lambda_2$ is injective by evaluating $\lambda_2$ on each element of $(\Z/2\Z)^4$ and testing whether the result is zero.  In a similar way, we show that $\lambda_3$ is injective.  Comparing orders, we see that $\lambda_2$ and $\lambda_3$ are isomorphisms.  Therefore both $A$ and $\red_3(A)$ are isomorphic to $C_2\times C_2\times C_6 \times C_{18}$, and in particular have order~$432$.  Finally, we deduce $J_\Gamma(\F_3)=\red_3(A)$ and $J_\Gamma(\Q) = A$.
\end{proof}

We now determine the image of the set of divisors of degree~$3$ under the map~$\phi$ defined by~\eqref{eq:phi}.

\begin{proposition}
\label{prop:1}
The image of $(\Sym^3 X_\Gamma)(\Q)$ under~$\phi$ equals the set of points in $J_\Gamma(\Q)$ represented by effective divisors of degree~$3$ supported on the cusps.
\end{proposition}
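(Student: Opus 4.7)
The inclusion $\supseteq$ is immediate, since a $\Q$-rational effective divisor of degree~$3$ supported on the cusps has class in~$J_\Gamma(\Q)$ lying in the image of~$\phi$. For the reverse, let $S_0\subseteq J_\Gamma(\Q)$ denote the image under~$\phi$ of the set of $\Q$-rational effective degree-$3$ divisors supported on the cusps, and let $S_1=\phi\bigl((\Sym^3 X_\Gamma)(\Q)\bigr)$; the goal is to prove $S_1\subseteq S_0$.

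By Proposition~\ref{prop_j2_14}, the reduction map $\red_3\colon J_\Gamma(\Q)\to J_\Gamma(\F_3)$ is a bijection, so the inclusion can be checked after reduction modulo~$3$. Functoriality of~$\phi$ with respect to base change gives $\red_3(S_1)\subseteq\phi\bigl((\Sym^3 X_\Gamma)(\F_3)\bigr)$, so it suffices to establish
\begin{equation*}
\phi\bigl((\Sym^3 X_\Gamma)(\F_3)\bigr)\subseteq\red_3(S_0).
\end{equation*}

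I would verify this by direct computation, reusing the PARI/GP implementation of Khuri-Makdisi's algorithms employed in the proof of Proposition~\ref{prop_j2_14}. First, enumerate the $\Q$-rational effective cuspidal divisors of degree~$3$: from the description of the cusps given after~\eqref{eq:X_Gamma}, these are the $\binom{11}{3}=165$ multisets of size~$3$ drawn from the $9$ rational cusps, together with the $3$ Galois orbits of non-rational cusps (the $9$ non-rational cusps split into $3$ orbits of size~$3$ under the cyclic Galois group of $\Q(\zeta_7)^+/\Q$). Computing the classes in $J_\Gamma(\F_3)$ of these $168$ divisors produces $\red_3(S_0)$ explicitly as a subset of~$J_\Gamma(\F_3)$.

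To finish, for each class $y\in J_\Gamma(\F_3)\setminus\red_3(S_0)$, I would apply the fourth Khuri-Makdisi operation listed in the proof of Proposition~\ref{prop_j2_14} to compute the smallest $r\ge 0$ such that $y=[D-rO]$ for some effective divisor~$D$ over~$\F_3$ of degree~$r$, and verify $r>3$; this precludes $y$ from belonging to $\phi\bigl((\Sym^3 X_\Gamma)(\F_3)\bigr)$. Once this test passes for every such~$y$, the displayed inclusion holds, and the bijectivity of~$\red_3$ then gives $S_1=S_0$. The main obstacle is purely computational---both the enumeration and the minimal-degree tests must be carried out in the Khuri-Makdisi framework---but nothing conceptual beyond what has already been established is required: bijectivity of~$\red_3$, contraction of~$L$ and~$L'$ by~$\phi$ to the two distinct classes~$c_H$ and~$c_{H'}$, and the injectivity of~$\phi$ outside $L\cup L'$.
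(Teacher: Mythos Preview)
Your approach is correct and essentially the same as the paper's. Both arguments reduce modulo~$3$ via the bijection $\red_3\colon J_\Gamma(\Q)\to J_\Gamma(\F_3)$, and both run the Khuri-Makdisi minimal-degree test over $J_\Gamma(\F_3)$ to identify exactly which classes admit an effective representative of degree~$\le 3$. The only organisational difference is in the comparison step: the paper argues geometrically (using the contraction of $L$ and~$L'$) that the $168$ cuspidal divisors fall into exactly $164$ linear equivalence classes, then runs the test on all $432$ elements of $J_\Gamma(\F_3)$ and matches the count $164=164$; you instead propose to compute $\red_3(S_0)$ directly and verify $r>3$ on its complement. Your version avoids the separate count of cuspidal equivalence classes, at the cost of having to evaluate the $168$ cuspidal reductions explicitly. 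Either way the computational content is identical. (Incidentally, the facts you list at the end about $L$, $L'$, $c_H$, $c_{H'}$ and injectivity of~$\phi$ are not actually used in your argument; bijectivity of $\red_3$ suffices.)
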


\begin{proof}
Because $X_\Gamma$ has 9 rational cusps and 3 Galois orbits of cusps with field of definition $\Q(\zeta_3)^+$, there are $\binom{9+3-1}{3}+3=168$ effective divisors of degree~$3$ supported on the cusps.
The nine $\Q$-rational cusps of $X_\Gamma$ lie above three rational points of $X_\Gamma/H$, and also above three rational points of $X_\Gamma/H'$.  Furthermore, none of the three Galois orbits of cusps with field of definition $\Q(\zeta_7)^+$ lies over a single rational point of $X_\Gamma/H$ or $X_\Gamma/H'$.
This implies that the 168 effective divisors of degree~$3$ supported on the cusps form 164 linear equivalence classes, namely 162 consisting of 1 divisor and 2 consisting of 3 divisors.

For each of the 432 points $x\in J_\Gamma(\F_3)$, we compute the least $r\ge0$ such that $x$ is of the form $[D-rO]$ for some effective divisor $D$ of degree~$r$ on $(X_\Gamma)_{\F_3}$.
This yields exactly 164 points in $J_\Gamma(\F_3)$ of the form $[D-3O]$ with $D$ an effective divisor of degree~$3$ on $(X_\Gamma)_{\F_3}$. Therefore at most 164 points in $J_\Gamma(\Q)$ have this property, and since we already have 164 points in $J_\Gamma(\Q)$ that are represented by effective divisors of degree~$3$ supported on the cusps, we are done.
\end{proof}

\begin{proof}[Proof of Theorem~\ref{theorem:cubic}]
An elliptic curve $E$ over a cubic field $K$ with an embedding of $C_2\times C_{14}$ defines an effective divisor $D$ of degree~$3$ on $X_\Gamma$, which we can view as a $\Q$-rational point of $\Sym^3 X_\Gamma$. Then $\phi(D)$ is a $\Q$-rational point of the image of~$\phi$ in~$J_\Gamma$.
By Proposition \ref{prop:1} and the fact that $D$ is evidently not supported on the cusps, $D$ lies in one of the two copies of $\PP^1_\Q$ inside $\Sym^3 X_\Gamma$ that are contracted under $\phi$.
It follows that $D$ is the inverse image of a $\Q$-rational point on one of the two rational curves $X_\Gamma/H$ and $X_\Gamma/H'$ under the maps $q_H$ and $q_{H'}$, respectively.
This implies that $K$ is normal over~$\Q$.
It is known that the field of definition of the two elliptic points of $X_*(7)$ equals $\Q(\zeta_3)$; see for example \cite[\S\,4.4]{elk}.
Thus $D$ lies above a non-elliptic point $s\in X_*(7)(\Q)$, and $E$ is the base change to~$K$ of the fibre at~$s$ of the universal elliptic curve over the complement of the cusps and elliptic points in $X_*(7)$.
We conclude that $E$ is defined over $\Q$.
\end{proof}

\begin{remark}
Given an elliptic curve $E$ over a cubic field $K$ with a subgroup isomorphic to $C_2\times C_{14}$, the proof of Theorem~\ref{theorem:cubic} yields the following procedure to determine a model of $E$ over~$\Q$.
Choose a point $P$ of order~$7$ in $E(K)$, and write down the unique Weierstrass equation for~$E$ such that the points $P$, $2P$ and~$4P$ lie on the line $y=0$ and the points $3P$, $5P$ and~$6P$ lie on the line $y=-x$.  Then this Weierstrass equation has coefficients in~$\Q$.
\label{remark:explicit-cubic}
\end{remark}

\begin{example}
Consider the cubic field $K=\Q(\alpha)$ of discriminant $31^2$, where $\alpha^3-\alpha^2-10\alpha+8=0$.  The elliptic curve $E$ over~$K$ defined by the Weierstrass equation
$$
\begin{aligned}
  y^2 + xy + y &= x^3 - x^2 + (-3737\alpha^2-8584\alpha+9067)x\\
  &\qquad+(203770\alpha^2+468074\alpha-494427)
\end{aligned}
$$
has torsion subgroup isomorphic to $C_2\times C_{14}$, and the point $P=(14\alpha^2 + 32\alpha - 33, 59\alpha^2 + 136\alpha - 144)$ has order~$7$.  After a change of variables to bring $E$ in the form described by Remark~\ref{remark:explicit-cubic} with respect to~$P$, we obtain a Weierstrass equation with coefficients in $\Q$, namely
$$
y^2 + xy = x^3 - \frac{2^2\cdot11}{3^2\cdot31}x^2 + \frac{2^6\cdot7}{3^5\cdot31^2}x + \frac{2^{12}}{3^9\cdot31^3}.
$$
In fact, $E$ is the base change of the elliptic curve over~$\Q$ with Cremona label 1922c1.
\end{example}

\subsection*{Acknowledgements}
We thank the referee for some helpful suggestions.


\begin{thebibliography}{99}





\bibitem{mag}
W. Bosma, J. J. Cannon, C. Fieker, A. Steel (eds.), Handbook of Magma functions, Edition 2.20 (2014).

\bibitem{bbdn}
J. G. Bosman, P. J. Bruin, A. Dujella and F. Najman, \emph{Ranks of elliptic curves with prescribed torsion over number fields}, Int. Math. Res. Notices {2014} (2014), 2885--2923.

\bibitem{dkm}
M. Derickx, S. Kamienny and B. Mazur, \emph{Rational families of $17$-torsion points of elliptic curves over number fields}, in: V. Kumar Murty (ed.), Momose Memorial Volume, Contemp. Math., to appear, \url{http://www.math.harvard.edu/~mazur/papers/For.Momose20.pdf}

\bibitem{ds}
M. Derickx and A. V. Sutherland, \emph{Torsion subgroups of elliptic curves over quintic and sextic number fields}, Proc. Amer. Math. Soc., to appear.

\bibitem{elk}
N. D. Elkies, \emph{The Klein quartic in number theory}, in: S. Levy (ed.), \emph{The eightfold way. The beauty of Klein's quartic curve}, Cambridge University Press, Cambridge, 1999.

\bibitem{har}
R. Hartshorne, \emph{Algebraic Geometry}, Springer-Verlag, New York, 1977.

\bibitem{kato}
K. Kato, \emph{$p$-adic Hodge theory and values of zeta functions of modular forms}, Cohomologies $p$-adiques et applications arithm\'etiques. III.  Ast\'erisque {295} (2004), 117--290.

\bibitem{km1}
K. Khuri-Makdisi, \emph{Linear algebra algorithms for divisors on an algebraic curve}, Math. Comp. {73} (2004), 333--357.

\bibitem{km2}
K. Khuri-Makdisi, \emph{Asymptotically fast group operations on Jacobians of general curves}, Math. Comp. {76} (2007), 2213--2239.

\bibitem{naj}
F. Najman, \emph{Torsion of rational elliptic curves over cubic fields and sporadic points on $X_1(n)$}, Math. Res. Letters {23} (2016), 245--272.

\bibitem{pari}
The PARI~Group, PARI/GP (version {\tt 2.7.6}), Bordeaux, 2016, \url{http://pari.math.u-bordeaux.fr/}.

\bibitem{rab}
F. P. Rabarison, \emph{Structure de torsion des courbes elliptiques sur les corps quadratiques}, Acta Arith. {144} (2010), 17--52.

\bibitem{sage}
The Sage Developers, SageMath, the Sage Mathematics Software System (version 7.2), 2016, \url{http://www.sagemath.org/}.

\end{thebibliography}
\end{document}